\newcommand\beq{\begin{equation}}
\newcommand\eeq{\end{equation}}
\newcommand\bce{\begin{center}}
\newcommand\ece{\end{center}}
\newcommand\bea{\begin{eqnarray}}
\newcommand\eea{\end{eqnarray}}
\newcommand\ba{\begin{array}}
\newcommand\ea{\end{array}}
\newcommand\ben{\begin{enumerate}}
\newcommand\een{\end{enumerate}}
\newcommand\bit{\begin{itemize}}
\newcommand\eit{\end{itemize}}
\newcommand\brr{\begin{array}}
\newcommand\err{\end{array}}
\newcommand\bt{\begin{tabular}}
\newcommand\et{\end{tabular}}
\newtheorem{theorem}{Theorem}[section]
\newtheorem*{theorem*}{Theorem}
\newtheorem{cor}[theorem]{Corollary}
\newtheorem{definition}[theorem]{Definition}
\newtheorem{example}[theorem]{Example}
\newtheorem{defn}[theorem]{Definition}
\newtheorem{remark}[theorem]{Remark}
\newcommand{\todo}[1]{\vspace{5 mm}\par \noindent
\marginpar{\textsc{ToDo}} \framebox{\begin{minipage}[c]{0.95
\textwidth}
 \end{minipage}}\vspace{5 mm}\par}
\begin{document}
\title{Counting King Permutations on the Cylinder}
\author{Eli Bagno, Estrella Eisenberg, Shulamit Reches and Moriah Sigron}


\maketitle
\begin{abstract}
We call a permutation $\sigma=[\sigma_1,\dots,\sigma_n] \in S_n$ a {\em cylindrical king permutation} if $ |\sigma_i-\sigma_{i+1}|>1$ for each  $1\leq i \leq n-1$ and  $|\sigma_1-\sigma_n|>1$. 
We present some results regarding the distribution 
of the cylindrical king permutations, including some interesting recursions. We also calculate their asymptotic proportion in the set of the 'king permutations', i.e. the ones which satisfy only the first of the two conditions above. With this aim we define a new parameter on permutations, namely, the number of {\em cyclic bonds} which is a modification of the number of bonds. In addition, we present some results regarding the distribution of this parameter.\\

\end{abstract}
\section{Introduction}
In a recent paper \cite{BERS_KINGS}, the authors dealt with the set of king permutations. The inspiration was Hertzsprung's problem \cite{Sl},dealing with the number of ways to arrange $n$ non-attacking chess kings on an $n \times n$ chess board such that each column and each row contains exactly one chess king.  

There is a tight connection between the chess problem and the symmetric group $S_n$.
If we consider a permutation $\sigma=[\sigma_1,\dots,\sigma_n]\in S_n$ in a geometrical way as the set of all lattice points of the form $(i,\sigma_i)$ where $1 \leq i \leq n$, the problem of finding all the ways to arrange $n$ non-attacking chess kings is equivalent to the problem of finding all permutations $\sigma \in S_n$ such that for each $1 < i  \leq n$, $|\sigma_i-\sigma_{i-1}|>1$. This set is counted in OEIS A002464. 


Let $K_n$ be the set of all such permutations in $S_n$.  In this paper we call them simply {\it king permutations} or just {\it kings}.  
For example: 
$K_1=S_1, K_2=K_3=\emptyset$, $K_4=\{[3142],[2413]\}$. 
An explicit formula for the number of king permutations was given by Robbins \cite{Ro}. He also showed that when $n$ tends to infinity, 
the probability of picking such a permutation from $S_n$ approaches $e^{-2}$.

A natural question is to extend the Hertzsprung's problem to a celebrated variant of the chess game, namely the cylindrical chess, in which the right and left edges of the board are imagined to be joined in such a way that a piece can move off one edge and reappear at the opposite edge, moving in the same or a parallel line \cite{JELIS}.


\begin{definition}
Let $CK_n$ be defined as

$$CK_n=\{\sigma=[\sigma_1,\dots,\sigma_n] \in S_n: |\sigma_i-\sigma_{i+1}|>1,|\sigma_1-\sigma_n|>1,1\leq i \leq n-1\}.$$ 
An element of $CK_n$ will be called a {\em cylindrical king}.
\end{definition}
For example: $CK_1=S_1, CK_2=CK_3=CK_4=\emptyset$, $CK_5=\{[31425],[14253],[42531],[25314],$\\$[53142],[24135],[41352],[13524],[35241],[52413]\}$.\\
Note that each cylindrical king can also be seen as a directed Hamiltonian path in the complement of the n-cycle graph. 
The sets $CK_n$ are counted by the sequence A002493 of OEIS. The first $8$ elements are $1,0,0,0,10,60,462,3920,36954$. They were counted by Abramson and Moser in \cite{AM}.\\ 

This paper deals with some aspects of counting cylindrical king permutations. In order to facilitate the counting, we define a new concept: the {\it cyclic bond}, which extends the known concept of a bond appeared in \cite{BERS_KINGS}.\\
A {\it bond} in a permutation $\pi\in S_n$ is a length $2$ consecutive sub-sequence of adjacent numbers. Note that a king permutation is actually a permutation without bonds. This point of view can be used also to describe the set of cylindrical king permutations, provided that we slightly modify the definition of the bond in order to obtain what we call in this paper a {\em cyclic bond}. 
\begin{defn}\label{definition of blocks}
Let $\pi =[\pi_1,\ldots,\pi_n]\in S_n$ and let $i \in [n-1]$. We say that the pair $(\pi_i,\pi_{i+1})$ is a {\em (regular) bond} in $\pi$ if $\pi_{i}-\pi_{i+1} =\pm 1$. If $\pi_{n}-\pi_1=\pm 1$ then we say that the pair $(\pi_n,\pi_1)$ is an {\em edge bond} of $\pi$. In general, adopting the convention that $\pi_{n+1}=\pi_1$, we say that $(\pi_i,\pi_{i+1})$ is a cyclic bond if it is a regular or an edge bond.
\end{defn}

\begin{example}
In $\pi=[41325]$ there are 2 cyclic bonds.
The regular bond (3,2) and the edge bond (5,4).
\end{example}
According to that new definition, a permutation is a cylindrical king if and only if it has no cyclic bonds. 
This type of modification from regular to cyclic parameters has been used also in the case of the descent parameter. See for example \cite{Cellini},\cite{Petersen}.\\

Aside from its role as identifying the cylindrical kings, the definition of cyclic bonds leads to some interesting counting results by itself.

For each $\pi \in S_n$ we denote by $bnd(\pi)$ the number of regular bonds in $\pi$ and by $cbnd(\pi)$ the number of cyclic bonds in $\pi$.
\begin{definition}\label{init}
Let $B_0(t)=CB_0(t)=1$ and for $n \geq 1$: 
$$B_n(t)=\sum\limits_{\pi \in S_n}t^{bnd(\pi)},$$ and  $$CB_n(t)=\sum\limits_{\pi \in S_n}t^{cbnd(\pi)}.$$
 A simple calculations shows that:
$B_1(t)=CB_1(t)=1$,
$B_2(t)=2t$, $CB_2(t)=2t^2$,
$B_3(t)=2t^2+4t$ and $CB_3(t)=6t^2$.
Note that we chose to consider the permutations of $S_2$ as having $2$ cyclic bonds each. \\
\end{definition}
In Section \ref{cyclic bonds} we present some results regarding the counting of cyclic bonds and their effect on the structure of the cylindrical kings.\\ 
First we present relations between the number of regular bonds and the number of cyclic bonds 
(Theorem \ref{cbond1} ) for  $n \geq 2$:

  $$CB_{n+1}(t)=(n+1)B_n(t)+2(n+1)\sum _{i=1}^n(t-1)^iB_{n-i}(t),$$ where the initial conditions are as in
Definition \ref{init} .

We also have (Theorem \ref{cbond2})
for $n \geq 1$:
$$
B_n(t)=CB_n(t)+\frac{1}{n}(1-t)CB_n'(t)
$$

Then we introduce a recursion for the number of cyclic bonds. (Theorem \ref{cbond3} ) for $n \geq 2$:  
$$CB_{n+1}(t)=(n+1)[CB_n(t)+2\sum_{i=1}^{n}(t-1)^iCB_{n-i}(t)]+\frac{n+1}{n}(1-t)[CB_n'(t)+2\sum_{i=1}^n(t-1)^iCB_{n-i}'(t)]$$ where the initial conditions are as in Definition \ref{init}.

 These results have impact on the structure of the cylindrical kings and thus they enable us to introduce a recursion formula for the cylindrical kings along with a recursion that connects the number of kings and the number of cylindrical kings. (Corollary \ref{cor}):
 
  For $n \geq 2$:
\begin{enumerate}
\item $|CK_{n+1}|=(n+1)|K_n|+2(n+1)\sum _{i=1}^n(-1)^i|K_{n-i}|$ while $|K_0|=1$. 
\item $
|K_n|=|CK_n|+\frac{1}{n}|CB_{1,n}|$
\item $|CK_{n+1}|=(n+1)(|CK_n|+2\sum_{i=1}^{n}(-1)^i|CK_{n-i}|)+\frac{n+1}{n}(|CB_{1,n}|+2\sum_{i=1}^n(-1)^i|CB_{1,n-i}|)$
\end{enumerate}
where $|CB_{1,k}|$ is the number of permutations $\pi \in S_k$ with a single cyclic bond.

Section \ref{another recursion} exhibits a recursion connecting between the sizes of the sets of cylindrical
kings and non-cylindrical kings using a combinatorial proof. (Theorem \ref{recursion}):
\begin{center}
    $|A_{n}|=2|K_{n-1}|+|A_{n-2}|,$
where $A_n=K_n-CK_n.$
\end{center}

This enables us to calculate the asymptotic ratio of the set of cylindrical kings in the entire set of the kings. (Theorem \ref{asy}):
\begin{center}
   The asymptotic value of $|CK_n|/|K_n|$ is equal to 1. 
\end{center}

Finally, in Section \ref{generating} we calculate the distribution of the cylindrical king permutations using the distribution of the cyclic bonds. (Theorem \ref{TM:Generating Function for cyclic bonds} and Theorem \ref{generating of the cyclic kings})

\section{Cyclic bonds and their effect on cylindrical kings }\label{cyclic bonds}

\subsection{The number of regular bonds vs. the number of cyclic bonds}

Recall that for each $\pi \in S_n$ we denote by $bnd(\pi)$ the number of regular bonds in $\pi$ and by $cbnd(\pi)$ the number of cyclic bonds in $\pi$. Let $C_n$ be the cyclic sub-group of $S_n$
generated by the cycle $\omega=(1,2,...,n)$.
Note that the parameter $cbnd$ is invariant under the right action of $C_n$ (left shift of the one-line notation of $\pi$). Explicitly:
\begin{equation}\label{w}
cbnd(\pi)=cbnd(\pi \omega^i) \text{ for } 1 \leq i \leq n-1. 
\end{equation} 
\begin{example}
For $n=4$ one can easily see that if $\pi=[2134]$ then $\pi \omega=[1342],\,\pi \omega^2=[3421]$ and $\pi \omega^3=[4213]$. All of them have exactly $2$ cyclic bonds.  
\end{example} 
In order to count the permutations in $S_{n+1}$ with exactly $k$ cyclic bonds, we count only those permutations whose last element is $n+1$, and multiply by $n+1$.
Formally, 
\begin{equation}
\label{n+1}
 |\{\pi \in S_{n+1}:cbnd(\pi)=k\}|=(n+1)|\{\pi \in S_{n+1}:cbnd(\pi)=k\, \, and \,\, \pi_{n+1}=n+1\}|   
\end{equation}

Consider a permutation $\pi \in S_{n+1}$ such that  $\pi_{n+1}=n+1$, and define $\pi' \in S_{n}$ such that $\pi'_i=\pi_i$ for $1\leq i \leq n$. Then we have:

\begin{equation}
\label{case}
cbnd(\pi)=
\begin{cases}
 bnd(\pi')+1& \text{if } \pi'_n=n \text{  or } \pi'_1=n\\ 
bnd(\pi')  &{\text otherwise}.
\end{cases} 
\end{equation}

We discuss now the connection between the distributions of bonds and cyclic bonds (using Definition \ref{init}).

If we denote $$CZ_n(t)=\sum\limits_{\pi \in S_n,\pi_n=n}t^{cbnd(\pi)}.$$
then according to (\ref{n+1}) we have:
\begin{equation}
\label{zn}
CB_{n+1}(t)=(n+1)CZ_{n+1}(t) 
\end{equation}

Also, for $n\geq 2$ let
$$B_n^1(t)=\sum\limits_{\pi \in S_n , \pi_1=n} t^{bnd(\pi)}$$ and $$B_n^2(t)=\sum\limits_{\pi \in S_n , \pi_n=n} t^{bnd(\pi)}.$$
Then we have by (\ref{case}) for $n\geq 2:$
\begin{equation}\label{cz}
 CZ_{n+1}(t)=(B_n(t)-B_n^1(t)-B_n^2(t))+t(B_n^1(t)+B_n^2(t))   
\end{equation}

and thus according to (\ref{zn}) and (\ref{cz}):
\begin{equation}
\label{A} 
\begin{array}{ccc}
   CB_{n+1}(t)  & = & (n+1)[(B_n(t)-B_n^1(t)-B_n^2(t))+t(B_n^1(t)+B_n^2(t))]\\
    &= & (n+1)B_n(t)+(n+1)(t-1)(B_n^1(t)+B_n^2(t))
\end{array}
\end{equation}
In addition for $n\geq 2:$
$$B_n^1(t)=(B_{n-1}(t)-B_{n-1}^1(t))+tB_{n-1}^1(t)=B_{n-1}(t)+(t-1)B_{n-1}^
1(t)$$
$$B_n^2(t)=(B_{n-1}(t)-B_{n-1}^2(t))+tB_{n-1}^2(t)=B_{n-1}(t)+(t-1)B_{n-1}^2(t)$$
where $B^1_1(t)=B^2_1(t)=1$\\

Let us denote $X_n(t)=B_n^1(t)+B_n^2(t)$, thus for $n\geq 2:$

\begin{equation}
\label{B}
 X_n(t)=2B_{n-1}(t)+(t-1)X_{n-1}(t)=2B_{n-1}(t)+(t-1)(2B_{n-2}(t)+(t-1)X_{n-2}(t)).  
\end{equation}
A simple induction will show that for $n\geq 2:$
$$X_n(t)=2\sum _{i=1}^n(t-1)^{i-1}B_{n-i}(t)$$

Using (\ref{A}) and (\ref{B}) we have for $n\geq 2:$
$$CB_{n+1}(t)=(n+1)B_n(t)+(n+1)(t-1)X_n(t)=(n+1)B_n(t)+2(n+1)(t-1) \sum _{i=1}^n(t-1)^{i-1}B_{n-i}(t)$$

According to the above calculations we obtain the following theorem:
\begin{theorem}
\label{cbond1}
Let $n \geq 2$. Then  $$CB_{n+1}(t)=(n+1)B_n(t)+2(n+1)\sum _{i=1}^n(t-1)^iB_{n-i}(t)$$ where the initial conditions are as in Definition \ref{init}.
\end{theorem}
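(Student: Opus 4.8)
The plan is to reduce the statement to a count over permutations of $S_{n+1}$ whose last entry is the maximum, and then to relate the cyclic-bond statistic of such a permutation to the ordinary bond statistic of its truncation. First I would record that $cbnd$ is constant on the $C_n$-orbits obtained by cyclically shifting the one-line notation, as in (\ref{w}); consequently, among the $n+1$ cyclic shifts of any $\pi\in S_{n+1}$ exactly one has last entry $n+1$, which gives (\ref{n+1}) and hence $CB_{n+1}(t)=(n+1)CZ_{n+1}(t)$, where $CZ_{n+1}(t)=\sum_{\pi\in S_{n+1},\,\pi_{n+1}=n+1}t^{cbnd(\pi)}$ as in (\ref{zn}).

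Next, for $\pi\in S_{n+1}$ with $\pi_{n+1}=n+1$, write $\pi'=[\pi_1,\dots,\pi_n]\in S_n$. The internal adjacencies $(\pi_i,\pi_{i+1})$ for $1\le i\le n-1$ are exactly the adjacencies of $\pi'$, so they contribute $bnd(\pi')$; the two further adjacencies of $\pi$ are $(\pi_n,n+1)$ and the edge adjacency $(n+1,\pi_1)$, which are bonds precisely when $\pi'_n=n$ and when $\pi'_1=n$, respectively. Since $n$ occupies a single position in $\pi'$, at most one of these occurs, and this is exactly the case distinction (\ref{case}). Summing over the relevant permutations, and splitting off those with $n$ in the first or the last slot, yields (\ref{cz}) and then (\ref{A}), i.e. $CB_{n+1}(t)=(n+1)B_n(t)+(n+1)(t-1)X_n(t)$, where $X_n(t)=B_n^1(t)+B_n^2(t)$ is the bond generating function over permutations of $S_n$ with $n$ first, respectively last.

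It then remains to evaluate $X_n(t)$ in closed form. Running the same ``delete the maximal entry'' idea one step further — removing the leading $n$ from a permutation counted by $B_n^1$ leaves a permutation of $S_{n-1}$, and the deleted adjacency $(n,\pi_2)$ is a bond iff the new leading entry equals $n-1$ — gives $B_n^1(t)=B_{n-1}(t)+(t-1)B_{n-1}^1(t)$, and symmetrically for $B_n^2$. Adding these produces the first-order recursion (\ref{B}), $X_n(t)=2B_{n-1}(t)+(t-1)X_{n-1}(t)$. Unrolling it (a short induction on $n$, with base case checked from the conventions of Definition \ref{init} and $B_1^1(t)=B_1^2(t)=1$) gives $X_n(t)=2\sum_{i=1}^n(t-1)^{i-1}B_{n-i}(t)$. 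Substituting this into $CB_{n+1}(t)=(n+1)B_n(t)+(n+1)(t-1)X_n(t)$ and absorbing the factor $(t-1)$ to shift the exponent yields $CB_{n+1}(t)=(n+1)B_n(t)+2(n+1)\sum_{i=1}^n(t-1)^iB_{n-i}(t)$, which is the claim.

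The only real subtlety I anticipate is the boundary bookkeeping: one must confirm that the two new adjacencies in (\ref{case}) are never simultaneously bonds (this fails for very small $n$, and is what forces both the hypothesis $n\ge 2$ and the stipulation $CB_2(t)=2t^2$), and one must verify that the induction for $X_n$ is seeded correctly using $B_0(t)=1$. Everything else is routine polynomial algebra.
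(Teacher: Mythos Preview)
Your argument is correct and follows the paper's own proof essentially step for step: the reduction via cyclic invariance to permutations ending in $n+1$, the case analysis (\ref{case}), the splitting into $B_n^1$ and $B_n^2$, the recursion (\ref{B}) for $X_n(t)$, and the final unrolling are all exactly what the paper does. Your added remark about why $n\ge 2$ is needed (so that $\pi'_1=n$ and $\pi'_n=n$ cannot both hold) is a nice clarification that the paper leaves implicit.
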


In order to enable the calculation of $CB_{n+1}(t)$ using the above recursion, we just have to be able to calculate the polynomials $B_i(t)$ for $0 \leq i \leq n$. 

The following result provides a direct formula for $B_n(t)$ using $CB_n(t)$. 

\begin{theorem}
\label{cbond2}
Let $n \geq 1$. Then we have:
$$
B_n(t)=CB_n(t)+\frac{1}{n}(1-t)CB_n'(t)
$$
\end{theorem}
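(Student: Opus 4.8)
The plan is to exploit the $C_n$-invariance of the statistic $cbnd$ recorded in equation (\ref{w}), together with a ``cut the cycle open'' bijection. Fix $\pi \in S_n$ and set $c = cbnd(\pi)$. Regard the one-line word of $\pi$ as wrapped around a cycle, so that $\pi$ carries exactly $n$ cyclic adjacencies, namely the pairs $(\pi_i,\pi_{i+1})$ for $1 \le i \le n$ with indices read modulo $n$; by definition exactly $c$ of these $n$ adjacencies are cyclic bonds. The $n$ left-shifts $\pi,\pi\omega,\dots,\pi\omega^{n-1}$ are precisely the $n$ linear words obtained by cutting this cycle open at each of the $n$ possible places, and cutting at the adjacency $(\pi_i,\pi_{i+1})$ destroys exactly that one adjacency while leaving all the others intact (and an edge bond $(\pi_n,\pi_1)$ that survives the cut reappears as an interior regular bond, since being a bond only depends on $|{\cdot}-{\cdot}|=1$). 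Hence $bnd(\pi\omega^{i}) = c-1$ for the $c$ shifts whose cut falls on a cyclic bond and $bnd(\pi\omega^{i}) = c$ for the remaining $n-c$ shifts, so
\[
\sum_{i=0}^{n-1} t^{bnd(\pi\omega^{i})} \;=\; c\,t^{\,c-1} + (n-c)\,t^{\,c}.
\]

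Next I would sum this identity over all $\pi \in S_n$. On the left-hand side, for each fixed $i$ the map $\pi \mapsto \pi\omega^{i}$ is a bijection of $S_n$, so $\sum_{\pi \in S_n} t^{bnd(\pi\omega^{i})} = B_n(t)$ and the entire left-hand side collapses to $n\,B_n(t)$. On the right-hand side, with $c = cbnd(\pi)$, the contribution $\sum_{\pi} cbnd(\pi)\,t^{cbnd(\pi)-1}$ is exactly $CB_n'(t)$, while $\sum_{\pi}\big(n - cbnd(\pi)\big)\,t^{cbnd(\pi)} = n\,CB_n(t) - t\,CB_n'(t)$. Adding, the summed identity becomes $n\,B_n(t) = n\,CB_n(t) + (1-t)\,CB_n'(t)$, which is the claimed formula after dividing by $n$.

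Finally I would dispatch the degenerate cases so the conventions of Definition \ref{init} are honored: for $n=1$ both sides equal $1$; for $n=2$ the convention that each element of $S_2$ carries two cyclic bonds is exactly what makes the ``$n$ cyclic adjacencies'' bookkeeping above come out correctly, so the argument applies verbatim, and one can cross-check $n=2,3$ against the values listed in Definition \ref{init}. I do not anticipate a real obstacle; the only point demanding care is the bookkeeping in the cut-the-cycle step --- verifying that the $n$ left-shifts correspond bijectively to the $n$ cut positions, that each cut position destroys precisely the adjacency sitting there, and that exactly $c$ of the $n$ positions lie on a cyclic bond --- and lining this up with the small-$n$ conventions.
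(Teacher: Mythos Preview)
Your proof is correct and is essentially the same argument as the paper's: both exploit the $C_n$-orbit of $\pi$ (equivalently, the $n$ cyclic shifts) and the observation that exactly $cbnd(\pi)$ of these shifts have one fewer regular bond because the cut falls on a cyclic bond. The only cosmetic difference is that the paper phrases it as matching the contribution of each orbit to both sides, whereas you sum the per-$\pi$ identity over all of $S_n$ and invoke the bijectivity of $\pi\mapsto\pi\omega^i$; the underlying ``cut the cycle open'' bookkeeping is identical.
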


\begin{proof}
Let $\pi \in S_n$ be such that $cbnd(\pi)=k$, and denote by $[\pi]$ the orbit of $\pi$ under the right action of $C_n$. The contribution of $[\pi]$ to the polynomial $CB_n(t)$ is $nt^k$, and thus its contribution to the R.H.S is $nt^k+(1-t)kt^{k-1}$. In order to complete the proof, we have to find the contribution of $[\pi]$ to $B_n(t)$. Writing $\pi=[\pi_1,\dots,\pi_{n}]$, each representative of its orbit starts with some $\pi_i$, $1\leq i \leq n$. The number of regular bonds in $[\pi_i,\pi_{i+1},\dots,\pi_1,\dots,\pi_{i-1}]$ is $k-1$ if $[\pi_{i-1},\pi_i]$ is a cyclical bond and $k$ otherwise. We conclude that the contribution of $[\pi]$ to $B_n(t)$ is exactly $kt^{k-1}+(n-k)t^k=nt^k+(1-t)kt^{k-1}$ as required.

\end{proof}
Using Theorems \ref{cbond1}, and  \ref{cbond2}, we can create a recursion for the generating function of the cyclic bonds.\\
\begin{theorem}\label{cbond3}
Let $n \geq 2$. Then 
$$CB_{n+1}(t)=(n+1)[CB_n(t)+2\sum_{i=1}^{n}(t-1)^iCB_{n-i}(t)]+\frac{n+1}{n}(1-t)[CB_n'(t)+2\sum_{i=1}^n(t-1)^iCB_{n-i}'(t)]$$
where the initial conditions are as in Definition \ref{init}.
\end{theorem}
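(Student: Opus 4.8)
The plan is to prove the recursion by a direct substitution, with no new combinatorics involved: Theorem~\ref{cbond1} already expresses $CB_{n+1}(t)$ as a linear combination of $B_n(t),B_{n-1}(t),\dots,B_0(t)$, and Theorem~\ref{cbond2} rewrites each $B_m(t)$ ($m\ge 1$) in terms of $CB_m(t)$ and $CB_m'(t)$; feeding the second identity into the first and collecting like terms should produce the asserted formula.

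Concretely, I would begin from
\[
CB_{n+1}(t)=(n+1)B_n(t)+2(n+1)\sum_{i=1}^{n}(t-1)^i B_{n-i}(t)
\]
and replace, for each index $m$ with $1\le m\le n$, the polynomial $B_m(t)$ by $CB_m(t)+\tfrac1m(1-t)CB_m'(t)$. The tail term $i=n$ has to be handled separately, since there $B_{n-i}(t)=B_0(t)=CB_0(t)=1$ is a constant: it contributes $2(n+1)(t-1)^n CB_0(t)$ to the ``value part'' and nothing to the ``derivative part,'' so the a~priori undefined weight $\tfrac1{n-i}$ at $i=n$ never actually occurs. For every remaining term the substitution yields a value contribution $CB_{n-i}(t)$ together with a derivative contribution carrying the factor $\tfrac1{n-i}(1-t)(t-1)^i$, while the leading term $(n+1)B_n(t)$ splits as $(n+1)CB_n(t)+(n+1)\tfrac1n(1-t)CB_n'(t)$. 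Collecting the value contributions reassembles exactly $(n+1)\big[CB_n(t)+2\sum_{i=1}^{n}(t-1)^i CB_{n-i}(t)\big]$, i.e. the right-hand side of Theorem~\ref{cbond1} with every $B$ replaced by $CB$; collecting the derivative contributions produces $(n+1)(1-t)$ times a bracket assembled from $CB_n'(t)$ and the $CB_{n-i}'(t)$, which is the second summand of Theorem~\ref{cbond3}.

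I do not expect a genuine obstacle here --- the argument is a one-step substitution followed by regrouping. The only place that demands real care is the bookkeeping of the weights inherited from Theorem~\ref{cbond2}: the factor $\tfrac1m$ is index-dependent ($m=n-i$), so it must be tracked against the $(t-1)^i$ coefficients of Theorem~\ref{cbond1}, and the boundary term $i=n$ must be kept out of the derivative sum. As a sanity check I would verify the resulting identity for small $n$ against the known data $CB_2(t)=2t^2$, $CB_3(t)=6t^2$, and the value $CB_4(t)=8t^3+8t^2+8t$ produced directly by Theorem~\ref{cbond1}.
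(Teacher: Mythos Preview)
Your overall strategy---substitute Theorem~\ref{cbond2} into Theorem~\ref{cbond1}---is exactly what the paper intends (the paper offers no proof beyond the sentence ``Using Theorems~\ref{cbond1} and~\ref{cbond2}''). The problem is not the approach but the conclusion you draw from it. You correctly flag that the weight $\tfrac{1}{m}$ from Theorem~\ref{cbond2} is index-dependent, $m=n-i$, yet you then assert that after regrouping one obtains ``the second summand of Theorem~\ref{cbond3}.'' That step fails: the substitution produces
\[
(n+1)(1-t)\Bigl[\tfrac{1}{n}CB_n'(t)+2\sum_{i=1}^{n-1}\tfrac{1}{\,n-i\,}(t-1)^i CB_{n-i}'(t)\Bigr],
\]
with the varying coefficient $\tfrac{1}{n-i}$ in front of each derivative, whereas the displayed formula in Theorem~\ref{cbond3} has the \emph{uniform} prefactor $\tfrac{n+1}{n}$ pulled outside the entire bracket. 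These are not the same expression, and no amount of regrouping will convert one into the other.

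Your own proposed sanity check at $n=3$ exposes this. From Theorem~\ref{cbond1} (or direct enumeration) one has $CB_4(t)=8t^3+8t^2+8t$. Plugging $n=3$ into the formula exactly as printed in Theorem~\ref{cbond3}, using $CB_0=CB_1=1$, $CB_2=2t^2$, $CB_3=6t^2$, gives
\[
4\bigl[6t^3-2t^2+2t\bigr]+\tfrac{4}{3}(1-t)\bigl[12t+8t(t-1)\bigr]
=\tfrac{40}{3}t^3-\tfrac{8}{3}t^2+\tfrac{40}{3}t,
\]
which is not $8t^3+8t^2+8t$. By contrast, carrying out the honest substitution (with the $\tfrac{1}{n-i}$ weights) does return $8t^3+8t^2+8t$. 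So the identity as printed is not what the substitution proves; either the statement needs the index-dependent coefficients restored, or you need a genuinely different argument to reach the uniform-coefficient version---and no such argument is in sight, since the uniform version is false for $n=3$.
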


\subsection{The number of kings vs. the number of cylindrical kings}
Recall that a king permutations is actually a permutation without bonds. As a result, we get that $|K_n|=B_n(0)$.
Moreover, a permutation is a cylindrical king if and only if it has no cyclic bonds, thus $|CK_n|=CB_n(0)$.
By Theorems \ref{cbond1}, \ref{cbond2}, and \ref{cbond3}, we have the following corollary: 

\begin{cor}
\label{cor}
For $n \geq 2$:
\begin{enumerate}
\item $$|CK_{n+1}|=(n+1)|K_n|+2(n+1)\sum _{i=1}^n(-1)^i|K_{n-i}|$$ while $|K_0|=1$. 
\item $$
|K_n|=|CK_n|+\frac{1}{n}|CB_{1,n}|$$ 
\item $$|CK_{n+1}|=(n+1)(|CK_n|+2\sum_{i=1}^{n}(-1)^i|CK_{n-i}|)+\frac{n+1}{n}(|CB_{1,n}|+2\sum_{i=1}^n(-1)^i|CB_{1,n-i}|)$$
\end{enumerate}
where $|CB_{1,k}|$ is the number of permutations $\pi \in S_k$ with a single cyclic bond.
\end{cor}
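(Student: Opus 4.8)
The plan is to derive all three identities by specializing the polynomial identities of Theorems \ref{cbond1}, \ref{cbond2}, and \ref{cbond3} at $t=0$, using the two translations already recorded in the excerpt, namely $B_k(0)=|K_k|$ and $CB_k(0)=|CK_k|$, together with one elementary observation about the derivative $CB_n'$.

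First I would record that derivative fact. Write $CB_n(t)=\sum_{k\ge 0}c_{n,k}t^k$, where $c_{n,k}=|\{\pi\in S_n:cbnd(\pi)=k\}|$. Then $CB_n'(t)=\sum_{k\ge 1}k\,c_{n,k}t^{k-1}$, so that $CB_n'(0)=c_{n,1}=|CB_{1,n}|$. In the degenerate cases $n=0,1$ this gives $|CB_{1,0}|=|CB_{1,1}|=0$, which is consistent with $CB_0(t)=CB_1(t)=1$ from Definition \ref{init}.

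For part (1), I would set $t=0$ in Theorem \ref{cbond1}, so that $(t-1)^i\mapsto(-1)^i$; replacing $CB_{n+1}(0)$ by $|CK_{n+1}|$ and each $B_{n-i}(0)$ by $|K_{n-i}|$ (with $B_0(0)=1$, i.e.\ $|K_0|=1$) yields the stated recursion. For part (2), I would set $t=0$ in Theorem \ref{cbond2}, where $(1-t)\mapsto 1$, giving $B_n(0)=CB_n(0)+\tfrac1n CB_n'(0)$; by the two translations and the derivative fact this is exactly $|K_n|=|CK_n|+\tfrac1n|CB_{1,n}|$. For part (3), I would set $t=0$ in Theorem \ref{cbond3} in precisely the same way: $(t-1)^i\mapsto(-1)^i$, $(1-t)\mapsto 1$, each $CB_k(0)\mapsto|CK_k|$, and each $CB_k'(0)\mapsto|CB_{1,k}|$.

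I do not expect any genuine obstacle here beyond bookkeeping: the only step that needs a line of justification is the identity $CB_n'(0)=|CB_{1,n}|$, and the only thing to watch is that the index ranges and the initial conditions of Definition \ref{init} are consistent with the $n\ge 2$ hypotheses of the three theorems, so that each of the three substitutions is legitimate for every $n\ge 2$.
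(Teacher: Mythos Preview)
Your proposal is correct and follows essentially the same approach as the paper: the corollary is obtained by setting $t=0$ in Theorems \ref{cbond1}, \ref{cbond2}, and \ref{cbond3}, using $B_k(0)=|K_k|$, $CB_k(0)=|CK_k|$, and the observation $CB_k'(0)=|CB_{1,k}|$. The paper in fact gives no further detail than pointing to those three theorems, so your extra line justifying $CB_n'(0)=|CB_{1,n}|$ is a welcome clarification rather than a deviation.
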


\section{Another recursion and the asymptotic value of $\frac{|CK_n|}{|K_n|}$.}\label{another recursion}
 
In this section we introduce another recursion connecting between the sizes of the sets of cylindrical kings and non-cylindrical kings and present a combinatorial proof for this recursion. Eventually, this will allow us to 
prove that the proportion of the set of cylindrical kings in the entire set of the kings is asymptotically $1$.
We start with some notations.
First, define for each $n \geq 1$ the set of kings which are not cylindrical: $$A_n=K_n-CK_n.$$ 

\begin{theorem}
\label{recursion}
We have the following recursion:

$$|A_{n}|=2|K_{n-1}|+|A_{n-2}|.$$

\end{theorem}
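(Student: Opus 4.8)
The plan is to prove the equivalent identity
$$|A_n^-| = |K_{n-1}| + |A_{n-2}^-| \qquad (n \ge 5),$$
where for $m \ge 1$ we set $A_m^- = \{\sigma \in A_m : \sigma_1 = \sigma_m + 1\}$ and $A_m^+ = \{\sigma \in A_m : \sigma_m = \sigma_1 + 1\}$. Reversing the one-line notation is an involution on $A_m$ that interchanges $A_m^-$ and $A_m^+$, and $A_m = A_m^- \sqcup A_m^+$ because the unique cyclic bond of a non-cylindrical king $\sigma$ is its edge bond $\{\sigma_1,\sigma_m\}$, a pair of consecutive integers. Hence $|A_m| = 2|A_m^-|$ for all $m$, so the displayed identity, after doubling and using $|A_{n-2}| = 2|A_{n-2}^-|$, is equivalent to the theorem. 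Throughout, for $\sigma \in A_n^-$ I write $k := \sigma_n$, so that $\sigma_1 = k+1$.

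For the term $|K_{n-1}|$ I would delete the first letter: let $\Phi(\sigma)$ be the standardization of the word $\sigma_2\sigma_3\cdots\sigma_n$, i.e.\ erase the value $k+1$ and relabel order-isomorphically. Erasing the value $k+1$ can turn a non-adjacency of the word into an adjacency of consecutive integers only by bringing $k$ next to $k+2$; since $k = \sigma_n$ is the last letter of $\sigma_2\cdots\sigma_n$ and its only neighbour there is $\sigma_{n-1}$, one gets $\Phi(\sigma) \in K_{n-1}$ exactly when $\sigma_{n-1} \ne k+2$ (call such $\sigma$ \emph{regular}). I would then verify that $\Phi$ is a bijection from the regular elements of $A_n^-$ onto $K_{n-1} \setminus A_{n-1}^-$: its inverse sends $\tau \in K_{n-1}$ to the word obtained by adding $1$ to every letter of $\tau$ that is $\ge \tau_{n-1}+1$ and then prepending $\tau_{n-1}+1$; the resulting word fails to be a king only when $\tau_1 = \tau_{n-1}+1$ (that is, $\tau \in A_{n-1}^-$), while the regularity condition is automatic, since its failure would put the consecutive values $\tau_{n-1}$ and $\tau_{n-1}+1$ into the linearly adjacent slots $n-2,n-1$ of a king. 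Hence the regular part of $A_n^-$ has size $|K_{n-1}| - |A_{n-1}^-|$.

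The exceptional elements of $A_n^-$, those with $\sigma_{n-1} = k+2$, I would treat by instead deleting the \emph{last} letter. Erasing the value $k$ can fuse only $k-1$ with $k+1$, and $k+1 = \sigma_1$ is now the first letter, with sole neighbour $\sigma_2$; so the standardization of $\sigma_1\cdots\sigma_{n-1}$ lies in $K_{n-1}$, and then — its two end letters being consecutive — in $A_{n-1}^+$, unless $\sigma_2 = k-1$. The same style of check shows this deletion is a bijection from the exceptional $\sigma$ with $\sigma_2 \ne k-1$ onto all of $A_{n-1}^+$. For the doubly exceptional $\sigma$ (those with $\sigma_1=k+1,\ \sigma_2=k-1,\ \sigma_{n-1}=k+2,\ \sigma_n=k$) I would delete both endpoints at once; erasing $k$ and $k+1$ can fuse only $k-1$ with $k+2$, and for $n \ge 5$ these occupy positions $2$ and $n-1$, whose subword neighbours $\sigma_3,\sigma_{n-2}$ are not those positions, so no bond is created and the result lies in $A_{n-2}^+$; one checks this too is a bijection onto $A_{n-2}^+$. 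Adding up, $|A_n^{-,\mathrm{bad}}| = |A_{n-1}^+| + |A_{n-2}^+| = |A_{n-1}^-| + |A_{n-2}^-|$, so
$$|A_n^-| = \bigl(|K_{n-1}| - |A_{n-1}^-|\bigr) + \bigl(|A_{n-1}^-| + |A_{n-2}^-|\bigr) = |K_{n-1}| + |A_{n-2}^-|,$$
which is the asserted identity.

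The bulk of the write-up is routine king/edge-bond bookkeeping for the three deletion maps. The point that needs genuine care — and where a careless version of the argument would break — is pinning down the \emph{image} of each map: in every case the permutations that are not hit are exactly those in which two cyclically adjacent end entries are consecutive integers, and separating this ``edge bond'' phenomenon from an honest ``regular bond'' obstruction is precisely what produces the subtracted $A_{n-1}^-$ in the main term, the $A_{n-1}^-$ recovered from the first layer of exceptions, and the surviving $A_{n-2}^-$. This is also why one needs $n \ge 5$: for $n=4$ there is not enough room for the doubly exceptional deletion, and indeed $|A_4| = 2 \ne 0 = 2|K_3| + |A_2|$.
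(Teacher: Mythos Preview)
Your argument is correct and is essentially the same bijective proof as the paper's: both partition $A_n$ according to whether the second and second-to-last entries equal $k-1$ and $k+2$ respectively, and both use the endpoint–deletion maps (delete $\sigma_1$, delete $\sigma_n$, or delete both) to set up bijections with kings of smaller order. The paper keeps four cases $B_n^0,B_n^1,B_n^2,B_n^3$ on all of $A_n$, whereas you first halve by the reversal symmetry to $A_n^-$ and then merge the paper's $B_n^1$ and $B_n^3$ into a single ``regular'' case mapping onto $K_{n-1}\setminus A_{n-1}^-=CK_{n-1}\cup A_{n-1}^+$; the remaining two cases coincide exactly with $B_n^2$ and $B_n^0$. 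Your explicit restriction to $n\ge 5$, with the observation that $|A_4|=2\neq 0=2|K_3|+|A_2|$, is a genuine improvement in precision over the paper, which does not flag that the doubly-exceptional bijection (their $f_0$) degenerates when the middle word has length~$2$.
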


\begin{proof}





We present $A_n$ as a disjoint union of $4$ subsets as follows:

$$B_n^0=\{\pi\in K_n \mid \pi=[k,k+2,\dots,k-1,k+1], 2\leq k \leq n-2 \} \cup$$ $$\{\pi\in K_n \mid \pi=[k+1,k-1,\dots,k+2,k], 2\leq k \leq n-2 \}.$$

Note that by omitting the first or the last element in $\pi \in A_n^0$ we obtain a non king permutation. However, by omitting both  elements we obtain a permutation of $A_{n-2}$.  

Now, let 
$$B_n^1=\{\pi\in K_n \mid \pi=[k,l,\dots,k-1,k+1], 2\leq k \leq n-1 ,l\neq k+2\} \cup$$  $$\{\pi\in K_n \mid \pi=[k+1,k-1,\dots,l,k], 2\leq n-1 \leq n-2 , l\neq k+2\}.$$

Here, by omitting $k+1$ from $\pi \in B_n^1$ we get a permutation of $A_{n-1}$, while by omitting $k$ from $\pi$ we get a non-king of $S_{n-1}$. 

Similarly, define: 

$$B_n^2=\{\pi\in K_n \mid \pi=[k,k+2,\dots,m,k+1], 1\leq k \leq n-2 ,m\neq k-1\} \cup$$ $$\{\pi\in K_n \mid \pi=[k+1,m,\dots,k+2,k], 1\leq k \leq k-2 , m\neq k-1\},$$

and note that here by omitting $k$ from $\pi \in B_n^2$ we get a permutation of $A_{n-1}$, while by omitting $k+1$ from $\pi$ we get a non-king of $S_{n-1}$. 

Finally, define 
$$B_n^3=\{\pi\in K_n \mid \pi=[k,m,\dots,l,k+1], 1\leq k \leq n-1 ,l\neq k-1 ,m \neq k+2\}\cup$$  $$\{\pi\in K_n \mid \pi=[k+1,l,\dots,m,k]\mid  1\leq k \leq k-1 , l\neq k-1 ,m \neq k+2\}.$$

Here, by omitting any one of $k,k+1$ we get a permutation of $CK_{n-1}$. 
We prove the recursion by constructing the following functions:

\begin{enumerate}
    \item A bijection $f_0:B_n^0 \rightarrow A_{n-2}$ which implies that $|B_n^0|=|A_{n-2}|$
    \item Two bijections $f_1:B_{n}^1\rightarrow A_{n-1}$ and $f_2:B_n^2\rightarrow A_{n-1}$ which imply that $|B_n^1|=|A_{n-1}|=|B_n^2|$.
    \item A $2$ to $1$ mapping $f_3:B_n^3 \rightarrow CK_{n-1}$, which implies that $2|CK_{n-1}|=|B_n^3|$. 
\end{enumerate}

\begin{tikzpicture}[cap=round,line width=.2pt,scale=0.4pt,]
\label{A_n picture}

\draw[-](0+5,0)--(28+5,0);
\draw[-](0+5,-5)--(28+5,-5);
\draw[-](0+5,0)--(0+5,-5);
\draw[-](7+5,0)--(7+5,-5);
\draw[-](14+5,0)--(14+5,-5);
\draw[-](21+5,0)--(21+5,-5);
\draw[-](28+5,0)--(28+5,-5);

\node (An) at (0,-2.5) {$A_n=$};
\node (A0) at (3.5+5,-2.5) {$B_n^0$};
\node (A1) at (10.5+5,-2.5) {$B_n^1$};
\node (A2) at (17.5+5,-2.5) {$B_n^2$};
\node (A3) at (24.5+5,-2.5) {$B_n^3$};

\draw[-](5,-7)--(28+5,-7);
\draw[-](5,-12)--(28+5,-12);
\draw[-](5,-7)--(5,-12);
\draw[-](7+5,-7)--(7+5,-12);
\draw[-](14+5,-7)--(14+5,-12);
\draw[-](21+5,-7)--(21+5,-12);
\draw[-](28+5,-7)--(28+5,-12);

\node (Bn) at (0,-9.5) {$2K_{n-1}+A_{n-2}=$};
\node (B0) at (3.5+5,-9.5) {$A_{n-2}$};
\node (B1) at (10.5+5,-9.5) {$A_{n-1}$};
\node (B2) at (17.5+5,-9.5) {$A_{n-1}$};
\node (B3) at (24.5+5,-9.5) {$2CK_{n-1}$};

\node (f0) at (8,-6) {$f_0$};
\node (f1) at (15,-6) {$f_1$};
\node (f2) at (22,-6) {$f_2$};
\node (f3) at (29,-6) {$f_3$};

\foreach \from/\to in {A0/B0,A1/B1,A2/B2,A3/B3}
            \draw[->] (\from) -- (\to);

\end{tikzpicture}

We start by defining a bijection $f_0:B_n^0 \rightarrow A_{n-2}$
in the following way:
we first remove the last element of $\pi$ (note that after this step we obtain a non-king of order $n-1$) and then we remove the first element of the resulting permutation.

For example, let $\pi=[426153]\in B_6^0$. Then $f_0(\pi)=[2413]\in A_{4}$.

In order to show that $f_0$ is bijective, we present the inverse function. Explicitly, for $\sigma=[a-1,\dots,a] \in A_{n-2}$ (or $\sigma=[a,\dots,a-1] \in A_{n-2}$), ${f_0}^{-1}(\sigma)$ is obtained from $\sigma$ by adding the element $a$ sequentially at the two sides of $\sigma$, i.e. to the left of $a-1$ and to the right of $a$. 
For example, let $\sigma=[2413]$. Then we first add $3$ to the left of $\sigma$ to get $\sigma'=[32514]$ and then add again $3$ to the right of $\sigma'$ to get $[426153]\in B_n^0$.

Next, we construct a function $f_1:B_n^1 \rightarrow A_{n-1}$ by removing from $\pi=[\pi_1,\dots,\pi_n]$ the element $max\{\pi_1,\pi_n\}$.  
For example:  
Let $\pi=[5137246]$, then  $f_1(\pi)=[513624]$. 

In a similar way, define $f_2:B_n^2 \rightarrow A_{n-1}$ by removing the element $min\{\pi_1,\pi_n\}$. For instance, $f_2([5724136])=[624135]$. 

We show now the inverse of the function $f_1$. 
First, if $\sigma=[a,\dots,a-1] \in A_{n-1}$, then $f_1^{-1}(\sigma)$ is obtained by adding $a+1$ at the end of $\sigma$, i.e. $f_1^{-1}(\sigma)=[a,l\dots,a-1,a+1] \in B_n^1$ (note that $l \neq a+2$ since otherwise $\sigma \notin A_{n-1}$). If $\sigma=[a-1,\dots,a]$ then $f_1^{-1}(\sigma)$ is obtained by adding $a+1$ at the beginning of $\sigma$, i.e. $f_1^{-1}(\sigma)=[a+1,a-1,\dots, l,a] \in B_n^1$, ($l \neq a+2$). \\
Similarly, we show now the inverse of $f_2$.
 First, if $\sigma=[a,\dots,a-1] \in A_{n-1}$, then $f_2^{-1}(\sigma)$ is obtained by adding $a-1$ at the beginning of $\sigma$, i.e. $f_2^{-1}(\sigma)=[a-1,a+1\dots l,a] \in B_n^2$ (note that $\l \neq a-2$). If $\sigma=[a-1,\dots,a]$ then $f_2^{-1}$ is obtained by adding $a-1$ at the end of $\sigma$, i.e. $f_2^{-1}(\sigma)=[a,l\dots,a+1,a-1] \in B_n^2$ (since $l \neq a-2$). \\
Next, we construct a mapping $f_3:B_n^3 \rightarrow CK_{n-1}$ which is $2$ to $1$, i.e. each element of $CK_{n-1}$ will have exactly two preimages. For $\pi \in B_n^3$, the function $f_3$ removes from $\pi$ the maximum of its two extreme elements.

For example, let $\pi=[364152] \in B_6^3$. Then $f_3(\pi)=[53142]\in CK_5$. 
Note that we also have: $f_3([531426])=[53142]$. 
In order to see that the function $f_3$ is indeed $2$ to $1$, we show how to go back from an arbitrary element of $CK_{n-1}$ 
to its two preimages. Let $\sigma=[a,\dots,b] \in CK_{n-1}$, then we have that $a \neq b \pm 1$, so we define $\pi_1$ to be the permutation obtained by adding 
$a+1$ after $b$ and let $\pi_2$ be the permutation obtained by adding $b+1$ before $a$. It is easy to see then that $f_3(\pi_1)=f_3(\pi_2)=\sigma$. 

From this data we have that $$|A_n|=|B_n^0|+|B_n^1|+|B_n^2|+|B_n^3|=|A_{n-2}|+2|A_{n-1}|+2|CK_{n-1}|=|A_{n-2}|+2|K_{n-1}|.$$

\begin{remark}
Note that by extending the concept of {\em separator} which was developed by the authors in \cite{separator}  to {\it edge separator}, we can consider the above functions $f_0,f_1,f_2$ as omitting the vertical edge separators from $\pi$.
If $|\pi_1-\pi_{n-1}|=1$ then $\pi_n$ is a (vertical) {\it edge separator}. 
If $|\pi_n-\pi_2|=1$ then $\pi_1$ is a (vertical) {\it edge separator}.
\end{remark}
\end{proof}

Using the above theorem we can prove the following conclusion. 
\begin{theorem}
\label{asy}
The asymptotic value of $|CK_n|/|K_n|$ is equal to 1 
\end{theorem}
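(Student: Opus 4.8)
The plan is to leverage Theorem~\ref{recursion}, which gives $|A_n| = 2|K_{n-1}| + |A_{n-2}|$ where $A_n = K_n \setminus CK_n$, together with known asymptotics for $|K_n|$. Since $|CK_n|/|K_n| = 1 - |A_n|/|K_n|$, it suffices to show that $|A_n|/|K_n| \to 0$.

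First I would unfold the recursion: iterating $|A_n| = 2|K_{n-1}| + |A_{n-2}|$ down to the base cases yields $|A_n| = 2\sum_{j\ge 0} |K_{n-1-2j}| + (\text{base term})$, i.e. a sum of king-counts at arguments $n-1, n-3, n-5, \dots$. So I need to bound $\frac{1}{|K_n|}\sum_{i \equiv n-1 \ (2),\, i \le n-1} |K_i|$ and show it tends to $0$.

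The key input is the growth rate of $|K_n|$. From Robbins's result (cited in the introduction), $|K_n|/n! \to e^{-2}$, so $|K_n| \sim e^{-2} n!$. Consequently $|K_{n-1}|/|K_n| \sim 1/n \to 0$, and more generally $|K_{n-1-2j}|/|K_n|$ is of order $1/(n(n-1)\cdots(n-2j))$, which is summable with the whole sum dominated by its first term of order $1/n$. Concretely, I would argue $|A_n| \le 2|K_{n-1}| + 2|K_{n-3}| + \cdots \le 2|K_{n-1}|\sum_{j\ge 0} \prod_{\ell=0}^{2j}\frac{1}{n-\ell} \le 2|K_{n-1}| \cdot \frac{1}{1 - O(1/n)}$, hence $|A_n|/|K_n| = O(1/n) \to 0$. (Alternatively, and without invoking the precise constant $e^{-2}$, one can use that $|K_n| \ge c\, n!$ for some $c > 0$ and all large $n$, which already forces $|K_{n-1}|/|K_n| = O(1/n)$ and makes the tail geometrically negligible.) This gives $|CK_n|/|K_n| = 1 - |A_n|/|K_n| \to 1$.

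The main obstacle is ensuring the asymptotic estimate for $|K_n|$ is quotable in the form needed; the cleanest route is to cite Robbins for $|K_n| \sim e^{-2} n!$ and then the rest is routine. A secondary point of care is handling the base cases of the recursion correctly (the parity of $n$ determines whether the iteration terminates at $A_1$ or $A_2$, and these contribute only a bounded additive constant), but since we divide by $|K_n| \to \infty$, any bounded boundary contribution vanishes in the limit and does not affect the conclusion.
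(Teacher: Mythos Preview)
Your proposal is correct and follows essentially the same approach as the paper: use the recursion $|A_n|=2|K_{n-1}|+|A_{n-2}|$ together with Robbins's asymptotic $|K_n|\sim e^{-2}n!$ to conclude $|A_n|/|K_n|\to 0$. The paper's version is slightly more economical in that it applies the recursion only once and then bounds the leftover term via $0<|CK_{n-2}|/|K_n|\le |K_{n-2}|/|K_n|\to 0$, avoiding the full unfolding and the series summation you carry out.
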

\begin{proof}
By Theorem \ref{recursion} we have:  
$|A_{n}|=2|K_{n-1}|+|A_{n-2}|.$\\ Thus, $|K_n|-|CK_n|=2|K_{n-1}|+|K_{n-2}|-|CK_{n-2}|$ and we obtain:  
\begin{equation}
\label{kn}
|CK_n|/|K_n|=1-2|K_{n-1}|/|K_n|-|K_{n-2}|/|K_n|+|CK_{n-2}|/|K_n|    
\end{equation}
According to Robbins \cite{Ro}, when $n$ tends to infinity, 
the probability of picking a king permutation from $S_n$ approaches $e^{-2}$. 
Thus, the asymptotic value of $|K_{n}|$ is $n!e^{-2}$ and thus the asymptotic value of $|K_{n-1}|/|K_n|$ is $\frac{1}{n}\rightarrow 0$ and the asymptotic value of $|K_{n-2}|/|K_n|$ is $\frac{1}{n^2}\rightarrow 0$. As a result,
\begin{equation}
\label{kkn}
  0<|CK_{n-2}|/|K_n|<|K_{n-2}|/|K_n|\rightarrow 0  
\end{equation}
 Using \ref{kn} and \ref{kkn} we obtain $|CK_n|/|K_n|\rightarrow 1$   
\end{proof}

\section{A direct calculation of the distribution of cylindrical kings}
\label{generating}
\subsection{The distribution of cyclic bonds}
In \cite{H2} the distribution of the number of bonds was calculated, where the author used it, inter alia, to  evaluate the number of king permutations, while considering them as permutations without bonds.
In this section we introduce the distribution of our new concept, the cyclic bonds, and use it to calculate the distribution of the cylindrical king permutations. 
In a recent paper of the authors \cite{separator} the concept of marked permutations is used. Here, we use again this concept in order to calculate the distribution of the cyclic bonds over $S_n$. We start with the following definition:

\begin{definition}
 A {\em marked permutation} is a permutation $\pi \in S_n$ such that each cyclic bond is either chosen or neglected.  
 If several consecutive cyclic bonds are chosen, then they form a {\em run}. An entry that is not chosen is considered to be a run of length $1$ (a  trivial run). 
 Note that a run of a permutation might be ascending or descending.
\end{definition}

Note that each marked permutation $\pi \in S_n$ can be presented as a concatenation of runs. For example, the permutation $[83124567]$ contributes the marked permutation $[8/3/12/4567]$ (which consists of the $3$ runs $3,12,45678$), the marked permutation $[8/3/12/4567/]$ (which consists of the $4$ runs $8,3,12,4567$), as well as the marked permutation $[8/3/1/2/4567/]$ (which consists of the $5$ runs $8,3,1,2,4567$) and many more. Moreover, a marked permutation $\pi \in S_n$ with $m$ runs can be uniquely characterized by the following data: a vector containing the lengths of the runs (which is a composition $\lambda$ of $n$) and their directions (increasing or decreasing),
a permutation $\sigma \in S_m$ which determines their locations and $r$- the number of places the last run occupies at the beginning of the permutation $\pi$. 
This idea will be best explained by an example. 

\begin{example}
Let $\pi=[2/45/6/1/987/3]$. We write $\pi$ as a triple consisting of a 'directed' composition of $m=5$ parts $\lambda$, a permutation $\sigma \in S_5$ and $r=1$.   First, write $\pi$ as a sequence of runs: $b_1=1,b_2=32,b_3=45,b_4=6,b_5=987$.
Each run contributes its length to the composition. Then for each part, we add the sign $\uparrow$ if the corresponding run is increasing, the sign $\downarrow$ if the run is decreasing and no arrow if the run is of length $1$. 
In our case we get $\lambda=(1,2\downarrow,2\uparrow,1,3\downarrow)$. Now, $\sigma \in S_5$ is the permutation induced by the order of the blocks. In our case, since the block $b_1=1$ is located as the third block of $\pi$, (note that the digit $2$ does not constitute a separate block but a part of the block $32$), $b_2=32$ is located as the fifth block, $b_3=45$ is placed as the first block, $b4=6$ is the second block, and $b_5=987$ is located as the forth block, we have: $\sigma=[34152]$. The marked permutation $\pi$ is now uniquely defined by the pair $(\lambda,\sigma,r)$. 
Note that if the marked permutation is $\pi=[45/6/1/987/32]$ then we have the same data except for the fact the now $r=0$.  
\end{example}

In order to calculate the distribution of the number of cyclic bonds, we count marked permutations and use the inclusion-exclusion principle to extract the desired distribution. 

The next theorem presents a generating function for the number of cyclic bonds. 

For each $k,n \geq 0$, we denote $$a_{n,k}=|\{\pi \in S_n\mid cbnd(\pi)=k\}|.$$  Furthermore, let $$H(z,u)=\sum\limits_{n\geq 1}\sum\limits_{k\geq 0}{a_{n,k}z^n u^k}$$ be the generating function of the number of cylic bonds. 
We have now: 
\begin{theorem}
\label{TM:Generating Function for cyclic bonds}

$$H(z,u)=-2z^2(u-1)+\sum_{m\geq 1}{m! z^{m-1}\left(\frac{1+z(u-1)}{1-z(u-1)}\right)^{m-1}\left(z+\frac{2z(2z(u-1)-(z(u-1))^2)}{(1-z(u-1))^2}\right)}.$$
\end{theorem}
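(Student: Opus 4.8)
The strategy is to set up a correspondence between marked permutations and the triples $(\lambda,\sigma,r)$ described just before the theorem, translate that correspondence into a product of generating functions, and then invert the marking via inclusion–exclusion by substituting $u-1$ for the "chosen-bond" variable. Concretely, I would first fix the number of runs $m$. A marked permutation of $[n]$ with exactly $m$ runs is determined by: (i) a composition $\lambda=(\lambda_1,\dots,\lambda_m)$ of $n$ together with a choice of direction ($\uparrow$ or $\downarrow$) for each part of size $\geq 2$; (ii) a permutation $\sigma\in S_m$ giving the cyclic order in which the runs appear; and (iii) an integer $r$ recording how many positions the last run occupies at the front of $\pi$ (so $0\le r\le \lambda_{\sigma^{-1}(m)}$, or some similar range). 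The number of chosen cyclic bonds contributed by a run of length $\ell$ is $\ell-1$ for an interior run, but the "wrap-around" run split across the end and beginning of $\pi$ must be handled separately — this is exactly what $r$ bookkeeps.

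Next I would assemble the generating function run-by-run. Track $z$ for length and $u$ for each chosen cyclic bond. An ordinary run of length $\ell\ge 2$ in one of two directions contributes $2u^{\ell-1}z^\ell$, and a run of length $1$ contributes $z$, so a single non-wrapping run has generating function
\[
z+2\sum_{\ell\ge 2}u^{\ell-1}z^{\ell}=z+\frac{2uz^{2}}{1-uz}.
\]
Replacing $u$ by... — rather, since the marking is an over-count, I would at the end substitute so that a \emph{genuine} (vs. merely-present) bond is weighted correctly; the clean way is to introduce the marking weight $v$ for a chosen bond and set $v=u-1$ at the end, so that $\sum_{\text{markings}} v^{\#\text{chosen}} = (1+v)^{\#\text{bonds}}$ recovers $u^{\#\text{bonds}}$ with $u=1+v$. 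With $v=u-1$ a single interior run becomes $z+\dfrac{2z^{2}(u-1)}{1-z(u-1)}$, and a product of $m-1$ such runs (all runs except the one that may wrap) gives a factor $\left(z+\dfrac{2z^{2}(u-1)}{1-z(u-1)}\right)^{m-1}$; after pulling out $z^{m-1}$ this is $z^{m-1}\left(1+\dfrac{2z(u-1)}{1-z(u-1)}\right)^{m-1}=z^{m-1}\left(\dfrac{1+z(u-1)}{1-z(u-1)}\right)^{m-1}$, which is precisely the middle factor in the claimed formula. The remaining factor, $z+\dfrac{2z\bigl(2z(u-1)-(z(u-1))^{2}\bigr)}{(1-z(u-1))^{2}}$, should come from summing over the length of the wrap-around run and over the split parameter $r$: summing $u$-weights over the two halves of a run of length $\ell$ distributed as $r$ in front and $\ell-r$ at the back, with the appropriate count of chosen bonds on each side, produces the squared denominator $(1-z(u-1))^2$. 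The $m!$ counts the choices of $\sigma\in S_m$. Finally the additive correction term $-2z^{2}(u-1)$ accounts for the degenerate/overcounted cases at small $n$ — in particular the convention (stated in Definition \ref{init}) that the two permutations in $S_2$ are each declared to have $2$ cyclic bonds rather than what the run formalism would naively assign, and similarly any double-counting of the single-run permutation when $m=1$.

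The main obstacle, and where I would spend the most care, is the combinatorics of the wrap-around run and the parameter $r$: making sure that a cyclic bond sitting exactly at the seam between position $n$ and position $1$ is counted once and only once, that the range of $r$ is correct, and that length-$1$ and length-$2$ runs at the seam don't get miscounted (this is the source of both the $(1-z(u-1))^{2}$ denominator and the $-2z^2(u-1)$ fudge term, and it interacts with the nonstandard $S_2$ convention). I would verify the final formula by extracting the coefficient of $z^n$ for $n=1,2,3,4,5$ and checking it against the known values $CB_1(t)=1$, $CB_2(t)=2t^2$, $CB_3(t)=6t^2$ (with $u$ playing the role of $t$), and $|CK_5|=a_{5,0}=10$, $|CK_6|=60$, which pins down all the low-order correction terms.
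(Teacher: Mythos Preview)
Your plan is essentially the paper's own argument: encode marked permutations by the triple $(\lambda,\sigma,r)$, treat the wrap-around run separately from the $m-1$ ordinary runs, multiply the per-run generating functions together with the factor $m!$, and then pass from marked bonds to actual cyclic bonds by the inclusion--exclusion substitution $u\mapsto u-1$.

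Two small corrections to your heuristics. First, the squared denominator in the wrap-around factor is not produced by ``summing $u$-weights over the two halves'': a wrap-around run of length $k$ contributes exactly $k-1$ marked cyclic bonds \emph{regardless} of how it is split, so the only effect of summing over $r\in\{0,\dots,k-1\}$ is to introduce the multiplicity $k$, yielding $z+\sum_{k\ge 2}2k\,z^{k}u^{k-1}=z+\dfrac{2z\bigl(2zu-(zu)^{2}\bigr)}{(1-zu)^{2}}$ before the substitution. Second, the additive term $-2z^{2}u$ (which becomes $-2z^{2}(u-1)$) corrects only the overcount at $m=1,\ k=2$, where the two values $r=0,1$ combined with the two directions list each of $[12]$ and $[21]$ twice; it has nothing to do with the $S_2$ convention $CB_2(t)=2t^2$ from Definition~\ref{init}.
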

\begin{proof}
We first calculate the
generating function of {\bf marked} cyclic bonds.  Let $$F(z,u)=\sum\limits_{n \geq 0}\sum\limits_{k \geq 0} f_{n,k}z^nu^k$$ where $f_{n,k}$ is the number of permutations of order $n$ with $k$ marked cyclic bonds.
We count the marked permutations by considering each run separately. Let $m$ be the number of runs in a permutation. There are $m!$ ways 
 to arrange the runs in each marked permutation. 
For each such an arrangement, we distinguish between the last run of a permutation and the other runs. (If there is only one run then it will be considered the last one).  We start with the first $m-1$ runs. 
   Each run of length one contributes $z$, while each run of length $k \geq 2$ has $k-1$ regular bonds and can be increasing or decreasing, so it contributes $2z^k u^{k-1}$. This gives us $(z+2z^2u+2z^3u^2+2z^4u^3+\ldots)^{m-1}$.
   
  Now we discuss the last run that starts at the end of the permutation and may emerge at the beginning of the permutation (like the run $1234$ in the permutation $[{\bf4}/67/5/ {\bf123}]$). If it is of length $1$ then it contributes $z$. If it has length
 $k\geq 2$ then it has $k-1$ cyclic bonds and
may be decreasing or increasing. Moreover, its $k-r$ first digits are placed at the end of the permutation, and the other $r$ digits at the beginning of the permutation for $0 \leq r \leq k-1$. Thus its contribution is $2kz^k u^{k-1}$. This gives us $(z+2\cdot 2z^2u+2\cdot 3z^3u^2+2\cdot 4z^4u^3+\ldots)$. Using this technique, we actually count the permutations [12] and [21] twice (while taking $r=0$ and $r=1$ for the last block in the case that $m=1$) so we have to subtract $2z^2u$. 
 This leads to, $$
F(z,u)=-2z^2u+\sum_{m \geq 1}{m!(z+2z^2u+2z^3u^2+2z^4u^3+\ldots)^{m-1}(z+4z^2u+6z^3u^2+8z^4u^3+\ldots)}$$\\
$${=-2z^2u+\sum_{m\geq 1}{m! z^{m-1}\big(\frac{1+zu}{1-zu}\big)^{m-1}}}(z+\frac{2z(2zu-(zu)^2)}{(1-zu)^2}).$$ 
  Now, we can use $F(z,u)$ to obtain $H(z, u)$. 
  Since $F(z,u)$ counts the \textbf{marked} cyclic bonds and $H(z, u)$ counts \textbf{every} cyclic bond, using the inclusion–exclusion principle it follows that  $H(z,u)=F(z,u-1)$, and thus:  
$$H(z,u)=-2z^2(u-1)+\sum_{m\geq 1}{m! z^{m-1}\left(\frac{1+z(u-1)}{1-z(u-1)}\right)^{m-1}\left(z+\frac{2z(2z(u-1)-(z(u-1))^2)}{(1-z(u-1))^2}\right)}.$$

\end{proof}

\subsection{The distribution of cylindrical king permutations}

Substituting $u=0$ in Theorem \ref{TM:Generating Function for cyclic bonds} we can easily find a generating function for cylindrical king permutations, i.e., a function of the form $CK(z)=\sum\limits_{n\geq 1}{|CK_n|z^n}$.

\begin{theorem}
\label{generating of the cyclic kings}
$$CK(z)=2z^2+\sum_{m \geq 1}{m!z^{m-1}\left(\frac{1-z}{1+z}\right)^{m-1}}\left(\frac{z(1-2z-z^2)^2}{(1+z)^2}\right).$$ 
\end{theorem}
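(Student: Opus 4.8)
The plan is to substitute $u=0$ directly into the generating function $H(z,u)$ provided by Theorem \ref{TM:Generating Function for cyclic bonds} and simplify. Since $|CK_n| = a_{n,0} = |\{\pi \in S_n : cbnd(\pi)=0\}|$, we have $CK(z) = \sum_{n\geq 1}|CK_n|z^n = \sum_{n\geq 1} a_{n,0} z^n = H(z,0)$, because setting $u=0$ in $H(z,u)=\sum_{n,k} a_{n,k}z^n u^k$ kills every term with $k\geq 1$ and retains exactly the $k=0$ coefficients. So the whole theorem is the computation $H(z,0)$.

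First I would write $H(z,u)$ with $u=0$: the leading term $-2z^2(u-1)$ becomes $-2z^2(-1) = 2z^2$, which already matches the claimed leading term. Then in the summand, $z(u-1)$ becomes $-z$, so $\frac{1+z(u-1)}{1-z(u-1)}$ becomes $\frac{1-z}{1+z}$, matching the base of the $(m-1)$-th power. It remains to check that $z + \frac{2z(2z(u-1)-(z(u-1))^2)}{(1-z(u-1))^2}$ at $u=0$ equals $\frac{z(1-2z-z^2)^2}{(1+z)^2}$. At $u=0$ the numerator inside is $2z(-2z - z^2) = -4z^2 - 2z^3$, and the denominator is $(1+z)^2$, so the bracket is $z + \frac{-4z^2-2z^3}{(1+z)^2} = \frac{z(1+z)^2 - 4z^2 - 2z^3}{(1+z)^2}$. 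Expanding the numerator: $z(1+2z+z^2) - 4z^2 - 2z^3 = z + 2z^2 + z^3 - 4z^2 - 2z^3 = z - 2z^2 - z^3 = z(1 - 2z - z^2)$.

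Here I would flag the one genuine discrepancy to resolve: the claimed bracket is $\frac{z(1-2z-z^2)^2}{(1+z)^2}$ with the factor $(1-2z-z^2)$ \emph{squared}, whereas the direct substitution gives $\frac{z(1-2z-z^2)}{(1+z)^2}$ with no square (equivalently $\frac{z(1-2z-z^2)}{(1+z)^2} = z\cdot\frac{1-2z-z^2}{(1+z)^2}$). The main obstacle is therefore just verifying which form is correct — I expect this to be a typo in the stated theorem and the honest computation yields the unsquared version; alternatively one must recheck the algebra of $\left(\frac{1-z}{1+z}\right)^{m-1}$ against $z^{m-1}$ to see whether an extra factor of $(1-2z-z^2)/(1+z)$ can be absorbed, which it cannot in a way that depends on $m$. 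Assuming the unsquared form, the proof is complete after assembling the three pieces: $CK(z) = H(z,0) = 2z^2 + \sum_{m\geq 1} m!\, z^{m-1}\left(\frac{1-z}{1+z}\right)^{m-1}\cdot\frac{z(1-2z-z^2)}{(1+z)^2}$. I would present the computation as a short sequence of substitutions with the polynomial expansion of the numerator done explicitly, and note that convergence/formal-power-series validity is inherited from Theorem \ref{TM:Generating Function for cyclic bonds}.
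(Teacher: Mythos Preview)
Your approach is exactly the paper's: the paper simply states that the theorem follows by substituting $u=0$ into Theorem~\ref{TM:Generating Function for cyclic bonds} and writes $\qed$ without displaying any algebra. Your computation is correct, and the discrepancy you flagged is real: the direct substitution yields $\dfrac{z(1-2z-z^2)}{(1+z)^2}$, not $\dfrac{z(1-2z-z^2)^2}{(1+z)^2}$, so the exponent $2$ on $(1-2z-z^2)$ in the stated formula is a typo (indeed, the squared version would give $[z^2]CK(z)=-2$ rather than $|CK_2|=0$).
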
\qed

\section{Acknowledgments}
The authors want to thank Yuval Roichman for fruitful discussions.

\end{document}